\newcommand{\N}{\mathbb{N}}
\newcommand{\R}{\mathbb{R}}
\renewcommand{\H}{\R^d}
\DeclareMathOperator*{\argmin}{arg\,min}
\newcommand{\prox}[3][]{\operatorname{prox}^{#1}_{#2}\left(#3 \right)}
\theoremstyle{plain}
\newtheorem{theorem}{Theorem}[section]
\newtheorem{proposition}[theorem]{Proposition}
\newtheorem{lemma}[theorem]{Lemma}
\newtheorem{corollary}[theorem]{Corollary}
\theoremstyle{definition}
\newtheorem{definition}[theorem]{Definition}
\newtheorem{algo}[theorem]{Algorithm}
\theoremstyle{remark}
\newcommand{\Id}{\textup{Id}}
\begin{document}

	\begin{center}
		{\LARGE Ubiquitous algorithms in convex optimization\smallskip \\ generate self-contracted sequences}
	\end{center}

	\medskip

	\begin{center}
		{\large \textsc{Axel B\"ohm, Aris Daniilidis}}
	\end{center}

\bigskip
\noindent \textbf{Abstract.} In this work we show that various algorithms, ubiquitous in convex optimization (e.g. proximal-gradient, alternating projections and averaged projections) generate self-contracted sequences $\{x_{k}\}_{k\in\N}$. As a consequence, a novel universal bound for the \emph{length} ($\sum_{k\ge 0}\Vert x_{k+1}-x_k\Vert$) can be deduced. In addition, this bound is independent of both the concrete data of the problem (sets, functions) as well as the stepsize involved, and only depends on the dimension of the space.

	\vspace{0.55cm}

	\noindent \textbf{Keywords and phrases:} Proximal gradient algorithm, alternating projection, Self-contracted curve.

	\vspace{0.55cm}

	\noindent \textbf{AMS Subject Classification} \  \textit{Primary} 52A41, 65K05 ; \textit{Secondary} 52A05, 90C25

\section{Introduction}
\label{sec:introduction}

The notion of \textit{self-contracted curve} captures, under a simple metric definition (see forthcoming Definition~\ref{def:self_contracted}), characteristic properties of the gradient flow of a convex function, relevant for convergence. The term appeared for the first time in~\cite{DLS2010}, where it was shown that \emph{planar} self-contracted curves are rectifiable. Later on, exploring an old geometrical idea of Manselli-Pucci (see~\cite{MP1991}), the previous result has been extended to any finite dimensional Euclidean space. In particular, in~\cite{DDDL2015} (and independently in \cite{LMV2015} assuming continuity of the curves) it was shown that the length of any self-contracted curve in $\mathbb{R}^d$ is controlled by a universal constant $C_d$ (depending only on the dimension of the space) times the diameter of the image of the curve.\smallskip

The aforementioned control of the length directly yields uniform estimates for the asymptotic behaviour of the bounded orbits of quasiconvex gradient systems, convex subgradient systems as well as of the bounded orbits of convex foliations --- all of them being typical instances of self-contracted curves. Self-contractedness is indeed strongly related to convexity. It was shown in~\cite{DL2018} that under mild assumptions every smooth self-contracted curve can be obtained as an orbit of some smooth convex function. \smallskip

Another important feature of the notion of self-contractedness is that it translates naturally to the discrete case, to include sequences $\{x_k\}_{k\in\N}$ generated by some algorithmic scheme. (The series $\sum_{k\geq 0}\Vert x_{k+1}-x_k\Vert$ corresponds to the \textit{length of the sequence}.) A typical example consists of the iterates generated by the proximal-point algorithm applied to a convex function. These iterations, being obtained as successive projections to the convex foliation given by the sublevel sets of the function, generate a self-contracted sequence (cf.~\cite{DDDL2015}). This provides an independent proof of the convergence of the proximal-point algorithm. \smallskip

The objective of this work is to show that other classical iterative schemes such as the gradient descent algorithm of a smooth convex function with Lipschitz gradient, the alternating projection algorithm for two closed convex sets and the average projection method for finitely many closed convex sets, also generate self-contracted sequences. Consequently, a prior universal estimate for the convergence of all of these methods can be deduced. This estimate neither depends on the specific function nor on the choice of proximal parameters, since all self-contracted sequences/curves lying in the given bounded set admit a universal bound for their length. \smallskip

Our approach relies strongly on interpreting the aforementioned algorithms as particular instances of the proximal-gradient method (Forward-Backward algorithm), see Algorithm~\ref{alg:proxgrad} and then establishes that the iterates of the latter give a self-contracted sequence, see Theorem~\ref{thm:proxgrad_selfcontracted}. \smallskip

The proof of this central result is surprisingly simple, making astute use of an additional quadratic decay stemming from the strong convexity that appears in the proximal operator. This being said, establishing directly self-contractedness for the alternating projection algorithm is not an easy task, and might be quite involved even in the particular case that one of the convex sets is in fact a convex cone. Indeed, the generated sequence of this algorithm (and in general of all of the aforementioned algorithms) cannot be obtained, in any obvious way, via successive projections to some convex foliation related to our data. The only exception is the fixed-step gradient descent algorithm of a $C^{1,1}$-convex function (which, being identified with the proximal-point algorithm of another convex function, it can indeed be obtained with successive projections to some convex foliation). Therefore, overall, this new simple approach gives a technique for establishing self-contractedness, without passing through a convex foliation, which up-to-now was the only known way to proceed. In particular, as a by-product, we obtain a new proof for establishing self-contractedness of the proximal-point algorithm (cf. Corollary~\ref{cor:prox}).  \smallskip

Let us finally mention, for completeness, that self-contracted curves have also been considered in more general settings, emancipating from direct applications to asymptotic theory of dynamical systems or optimization algorithms. To this end, self-contracted curves have been studied in~\cite{DDDR2018} in Riemann manifolds, where rectifiability has been established via an involved proof that borrows heavily from the underlying Euclidean structure. Remarkably enough, recent works on the topic reveal that Euclidean structure is not a real restriction: generalizing the results of~\cite{L2016}, the authors in~\cite{ST2017} established that any self-contracted curve in any finite dimensional (potentianlly asymmetric) normed space is rectifiable. Futher extensions include CAT(0) spaces~\cite{O2020} and spaces with weak lower curvature bound~\cite{LOZ2019}. In view of these developments, it is possible that the notion of self-contracted curve will turn out to be relevant also for abstract dynamics in a metric setting (see \cite{AGS2008} e.g.)

\section{Preliminaries}%
\label{sec:preliminaries}

Throughout this paper, $\R^d$ will denote the $d$-dimensional Euclidean space and $\langle \cdot, \cdot\rangle$ its inner product which generates the distance $d(x,y):= \lVert x - y \rVert$. For a nonempty subset~$A$ we denote its \emph{diameter} by $\textup{diam}(A):= \sup\{ d(x,y) : x,y \in A \}$.

\begin{definition}[self-contracted curve]
  \label{def:self_contracted}
  Given a possibly unbounded interval $I\subset \R$, a map $\gamma: I \to \R^d$ is called \emph{self-contracted}, if for all $t_1,t_2,t_3 \in I$ such that $t_1 \le t_2 \le t_3$
  \begin{equation*}
    d(\gamma(t_3), \gamma(t_2)) \le d(\gamma(t_3), \gamma(t_1)).
  \end{equation*}
\end{definition}
Note that although originally inspired by continuous curves, this definition does not require any form of continuity or smoothness for the curve $\gamma$. In particular, taking $\gamma$ to be constant on each interval $[n,n+1)$, for all $n\in\N$, the definition also covers the case of discrete sequences. Formalizing this, we call a sequence $\{x_{k}\}_{k\in\N}$ in $\R^d$ self-contracted if for all $k_1, k_2, k_3 \in \N$ such that $k_1 \le k_2 \le k_3$
  \begin{equation*}
    d(x_{k_3}, x_{k_2}) \le d(x_{k_3},x_{k_1}).
  \end{equation*}

  This seemingly innocent property of self-contractedness has remarkable consequences. It was proven in~\cite[Theorem~3.3]{DDDL2015} that every self-contracted curve in a finite dimensional Euclidean space is rectifiable and its length satisfies
  $$\ell(\gamma) \le C_d\, \textup{diam}(\gamma(I)),$$
  where $C_d$ denotes a constant only depending on the dimension of the space.
  Therefore, any bounded self-contracted sequence $\{x_{k}\}_{k\in\N}$ converges to some $x_{\infty}$ and
  \begin{equation}
	\label{eq:bound-sc}
    \sum_{k=1}^{\infty} d(x_{k+1}, x_{k} ) \le C_d \, d(x_0, x_{\infty}).
  \end{equation}
  The aim of this work is to establish the self-contractedness of several classical algorithms in convex optimization. Previous convergence proofs relied on specific Lyapunov functions, in particular, the characteristic property of Fejer monotonicity with respect to the solution set $\mathcal{S}$ (cf.~\cite[Definition~5.1]{bc}), meaning $d_{\mathcal{S}}(x_{k+1}) \le d_{\mathcal{S}}(x_{k})$. Making use of the additional information that the iterates form a self-contracted sequence, we obtain a \textit{data independent} bound given by~\eqref{eq:bound-sc}. This bound can be further improved, using Fejer monotonicity, to
  \begin{equation*}
    \sum_{k=1}^{\infty} d(x_{k+1}, x_{k} ) \le C_d \, d_{\mathcal{S}}(x_0),
  \end{equation*}
  whenever $x_{\infty}\in \mathcal{S}$ (which can always be ensured in the forthcoming algorithm).

\section{Proximal-gradient generates self-contracted iterates}%
\label{sec:proximal_gradient_is_self_contracted}

Consider the \emph{classical} problem
\begin{equation}
  \label{eq:convex_splitting}
  \min_{x \in \H{}}\, g(x) + f(x)
\end{equation}
for a proper, convex and lower semicontinuous function $g:\H\to\overline{\R}$ and a differentiable convex function $f:\H\to\R$ with $L$-Lipschitz continuous gradient. We associate with the above system the \emph{Forward-Backward} or \emph{Proximal-Gradient} (cf.~\cite[Section 27.3]{bc}) operator
\begin{equation}
  \label{eq:prox-grad-op}
  T_{\alpha}(x) := \prox{\alpha g}{x - \alpha \nabla f(x)}
\end{equation}
with stepsize $\alpha>0$.

\subsection{Stepsize bounded by the inverse of the Lipschitz constant}%

The most established method to solve the above problem is described below:

\begin{algo}[Proximal-Gradient-Method]%
  \label{alg:proxgrad}
  In the above setting, for $x_{0} \in \R^{d}$ and a sequence of stepsizes $\{\alpha_{k}\}_{k\in\N} \subseteq (0, 1/L)$, consider the following iterative scheme
  \begin{equation*}
     \quad x_{k+1} = T_{\alpha_{k}}(x_{k}), \quad \forall k \ge 0.
  \end{equation*}
\end{algo}

\begin{theorem}[Main result]
  \label{thm:proxgrad_selfcontracted}
  The iterates generated by Algorithm~\ref{alg:proxgrad} (Proximal-Gradient-Method with variable stepsize) form a self-contracted sequence.
\end{theorem}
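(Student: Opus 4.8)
The plan is to reduce the three-index condition to a single one-step comparison and then exploit a majorization--minimization reading of the proximal-gradient step. Concretely, I would first observe that it suffices to prove, for every pair of indices $k < m$, the single inequality $\|x_m - x_{k+1}\| \le \|x_m - x_k\|$: fixing $k_3 = m$ and chaining this inequality from $k_1$ up to $k_2-1$ yields $d(x_{k_3}, x_{k_2}) \le d(x_{k_3}, x_{k_1})$, which is exactly self-contractedness. Thus the whole problem collapses to showing that, for a fixed later iterate $x_m$, the distances $d(x_m, x_k)$ are nonincreasing in $k$ for $k \le m$.

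To prove the one-step inequality, I would introduce the quadratic surrogate $F_k(z) := g(z) + f(x_k) + \langle \nabla f(x_k), z - x_k\rangle + \frac{1}{2\alpha_k}\|z - x_k\|^2$ and record that, by completing the square, the point $x_{k+1} = T_{\alpha_k}(x_k)$ is precisely the unique minimizer of $F_k$. Two elementary properties of $F_k$ drive the argument. First, since $f$ is convex with $L$-Lipschitz gradient and $\alpha_k < 1/L$, the descent lemma gives the majorization $\phi \le F_k$ pointwise, where $\phi := f + g$, with equality at $z = x_k$. Second, $F_k$ is $\frac{1}{\alpha_k}$-strongly convex, so evaluating the strong-convexity inequality at its minimizer $x_{k+1}$ against the point $x_m$ yields
$$F_k(x_m) - F_k(x_{k+1}) \ge \tfrac{1}{2\alpha_k}\|x_m - x_{k+1}\|^2.$$

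It then remains to bound the left-hand side from above by $\tfrac{1}{2\alpha_k}\|x_m - x_k\|^2$. For the first term I would use convexity of $f$ (the tangent line lies below the graph) to obtain $F_k(x_m) \le \phi(x_m) + \tfrac{1}{2\alpha_k}\|x_m - x_k\|^2$; for the second, the majorization gives $F_k(x_{k+1}) \ge \phi(x_{k+1})$. Substituting these into the displayed inequality produces
$$\tfrac{1}{2\alpha_k}\|x_m - x_{k+1}\|^2 \le \tfrac{1}{2\alpha_k}\|x_m - x_k\|^2 + \bigl(\phi(x_m) - \phi(x_{k+1})\bigr).$$
The proof is closed by the standard monotone decay of the objective along proximal-gradient iterates: $\phi(x_{k+1}) \le F_k(x_{k+1}) \le F_k(x_k) = \phi(x_k)$ shows that $\{\phi(x_k)\}_{k\in\N}$ is nonincreasing, hence $\phi(x_m) \le \phi(x_{k+1})$ for $m \ge k+1$ and the bracketed term is nonpositive.

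The step I expect to be the crux is the exact cancellation of the two quadratic terms: the strong-convexity constant of the surrogate is $1/\alpha_k$, which is the very same constant multiplying $\|x_m - x_k\|^2$ inside $F_k$. This matching of constants is what lets the $\|x_m - x_{k+1}\|^2$ term (coming from strong convexity at the minimizer) be compared cleanly against the $\|x_m - x_k\|^2$ term (coming from evaluating the surrogate at $x_m$), leaving behind only the function-value gap $\phi(x_m) - \phi(x_{k+1})$, which monotone descent renders harmless. The hypothesis $\alpha_k < 1/L$ is essential precisely because it guarantees the majorization $\phi \le F_k$; without it the surrogate would not dominate the objective and the entire comparison would break down.
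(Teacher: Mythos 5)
Your proposal is correct and is essentially the paper's own argument: your surrogate $F_k$ is exactly the paper's strongly convex function $\Phi_{x_k}$, and your combined inequality (quadratic growth at the minimizer $x_{k+1}$, plus the gradient inequality bounding $F_k(x_m)$ and the descent-lemma majorization bounding $F_k(x_{k+1})$ from below) is precisely the paper's Lemma~\ref{lem:decrease} evaluated at $z = x_m$. The closing steps --- monotone decrease of $(f+g)(x_k)$ obtained from the same surrogate, followed by chaining the one-step inequality $\lVert x_m - x_{k+1}\rVert \le \lVert x_m - x_k\rVert$ --- likewise mirror the paper's proof of Theorem~\ref{thm:proxgrad_selfcontracted}.
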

For the proof we shall make use of the following three lemmata. The first two are well known and will be quoted without proof. The third lemma is also quite standard for these problems. \smallskip

Before we proceed, let us first recall that the \emph{subdifferential $\partial \Phi$} of a convex function $\Phi:\R^{d} \to \overline{\R}$ at $x$ is defined as follows
\begin{equation*}
  \partial \Phi(x) := \{p \in \R^{d}: \, \Phi(x) + \langle p, y -x \rangle \le \Phi(y),\, \forall y \in \R^{d}\}.
\end{equation*}
In particular, a point $x^{*}\in\R^d$ is a minimizer of $\Phi$ ($x^{*}\!\in\!\argmin{\Phi}$) if and only if $0\in \partial \Phi(x^{*})$. Furthermore, a function $\Phi$ is called \emph{$\sigma$-strongly convex} if for every $x,y\in \R^{d}$ and for every $p\in \partial \Phi(x)$
\begin{equation*}
  \Phi(y) \ge \Phi(x) + \langle p, y - x \rangle + \frac{\sigma}{2}\Vert x - y \Vert^{2}.
\end{equation*}
The following result is straightforward. It will play an important role in the proof of Lemma~\ref{lem:decrease}.
\begin{lemma}[Quadratic decay]%
  \label{lem:strongly-convex-function-values}
  Let $\Phi:\H \to \overline{\R}$ be a $\sigma$-strongly convex function and let $x^{*}$ denote its global minimizer. Then, it holds
  \begin{equation*}
    \Phi(x) - \Phi(x^*) \ge \frac{\sigma}{2}\lVert x-x^* \rVert^2, \quad \forall x \in \H.
  \end{equation*}
\end{lemma}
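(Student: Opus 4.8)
The plan is to derive the estimate directly from the definition of $\sigma$-strong convexity, specialized to the minimizer. First I would invoke the optimality characterization recalled just above the statement: since $x^{*}$ is the global minimizer of $\Phi$, we have $0 \in \partial\Phi(x^{*})$. This is precisely the ingredient that makes the linear term in the strong convexity inequality vanish.

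Next I would write down the defining inequality of $\sigma$-strong convexity with base point $x^{*}$, the admissible subgradient $p = 0 \in \partial\Phi(x^{*})$, and an arbitrary $y$:
\begin{equation*}
  \Phi(y) \ge \Phi(x^{*}) + \langle 0, y - x^{*} \rangle + \frac{\sigma}{2}\lVert x^{*} - y \rVert^{2} = \Phi(x^{*}) + \frac{\sigma}{2}\lVert y - x^{*} \rVert^{2}.
\end{equation*}
Renaming $y$ as $x$ and rearranging yields the claimed bound for every $x \in \H$.

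There is essentially no obstacle here, in keeping with the statement being labelled straightforward. The only point deserving a word of justification is that $0$ is indeed an admissible subgradient at $x^{*}$, which is exactly the first-order optimality condition $x^{*} \in \argmin \Phi \iff 0 \in \partial\Phi(x^{*})$ stated in the preliminaries; this relies on $\Phi(x^{*})$ being finite, i.e.\ on $\Phi$ being proper. The extended-real-valued case $\Phi(x) = +\infty$ needs no separate treatment, since the asserted inequality is then trivially satisfied.
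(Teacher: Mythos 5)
Your proof is correct: the paper itself omits the argument (the lemma is ``quoted without proof'' as straightforward), and your derivation---invoking $0\in\partial\Phi(x^{*})$ from the optimality characterization and substituting $p=0$, $x=x^{*}$ into the paper's definition of $\sigma$-strong convexity---is exactly the standard one-line argument the authors have in mind. Your remarks on properness and on the trivial case $\Phi(x)=+\infty$ are sensible and complete the picture.
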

Whereas the previous statement gives a quadratic lower bound, the next one will give a quadratic upper bound. For a proof we refer to~\cite[Theorem~18.15]{bc} or~\cite{Bertsekas}.
\begin{lemma}[Descent Lemma]
  \label{lem:descent-lemma}
  Let $f:\R^{d}\!\to\!\R$ be a differentiable function with an $L$-Lipschitz gradient. Then for all $x,y \in \R^{d}$
  \begin{equation*}
	f(y) \le f(x) + \langle \nabla f(x), y-x \rangle + \frac{L}{2} \Vert y-x \Vert^2.
  \end{equation*}
\end{lemma}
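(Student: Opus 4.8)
The plan is to reduce the multivariate inequality to a one-dimensional computation along the segment joining $x$ and $y$, and then use the $L$-Lipschitz continuity of $\nabla f$ to control how much the gradient deviates from its value at $x$ along that segment. Concretely, I would fix $x,y\in\R^{d}$ and introduce the auxiliary function $\phi:[0,1]\to\R$ defined by $\phi(t):=f(x+t(y-x))$. Since $f$ is differentiable, the chain rule gives $\phi'(t)=\langle \nabla f(x+t(y-x)),\, y-x\rangle$, and because $\nabla f$ is ($L$-Lipschitz, hence) continuous, $\phi'$ is continuous on $[0,1]$, so the fundamental theorem of calculus applies to $\phi$.

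The second step is to write the quantity to be estimated as an integral. The fundamental theorem of calculus yields $f(y)-f(x)=\phi(1)-\phi(0)=\int_{0}^{1}\phi'(t)\,dt$, and subtracting the linear term $\langle \nabla f(x), y-x\rangle=\int_0^1 \langle \nabla f(x),\, y-x\rangle\,dt$ gives
\[
f(y)-f(x)-\langle \nabla f(x),\, y-x\rangle=\int_{0}^{1}\langle \nabla f(x+t(y-x))-\nabla f(x),\, y-x\rangle\,dt.
\]

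The third step is to bound the integrand pointwise. By the Cauchy--Schwarz inequality and the $L$-Lipschitz continuity of $\nabla f$ one has, for each $t\in[0,1]$,
\[
\langle \nabla f(x+t(y-x))-\nabla f(x),\, y-x\rangle \le \Vert \nabla f(x+t(y-x))-\nabla f(x)\Vert\,\Vert y-x\Vert \le L\,\Vert t(y-x)\Vert\,\Vert y-x\Vert = Lt\,\Vert y-x\Vert^{2}.
\]
Integrating this bound over $[0,1]$ and using $\int_{0}^{1}t\,dt=\tfrac{1}{2}$ produces the claimed estimate $f(y)-f(x)-\langle \nabla f(x),\, y-x\rangle\le \tfrac{L}{2}\Vert y-x\Vert^{2}$, which is exactly the inequality of the lemma.

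Since $f$ is assumed differentiable with $L$-Lipschitz gradient, I do not expect any genuine obstacle: differentiability supplies the chain-rule identity for $\phi'$, and the Lipschitz hypothesis supplies both the continuity needed to invoke the fundamental theorem of calculus and the pointwise bound on the integrand. The only point deserving a word of care is verifying that $\phi$ is continuously differentiable so that the fundamental theorem of calculus is legitimate, and this follows at once from the continuity of $\nabla f$, which is implied by the Lipschitz assumption.
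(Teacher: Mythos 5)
Your proof is correct and complete; it is the standard integral-form argument (reduce to the line segment, apply the fundamental theorem of calculus, then Cauchy--Schwarz and the Lipschitz bound). The paper itself quotes this lemma without proof, deferring to the references \cite[Theorem~18.15]{bc} and \cite{Bertsekas}, where essentially this same argument is given, so there is nothing to reconcile.
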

The following lemma is the core of our main result. It will give an estimation for the decrease of the objective function considered in~\eqref{eq:convex_splitting}, when applying the proximal-gradient operator $T_{\alpha}$ defined in~\eqref{eq:prox-grad-op}.
For the needs of the next lemma we denote
\begin{equation}
  \label{eq:x-plus}
  x^{+} = T_{\alpha}(x), \quad \forall x \in \R^{d}.
\end{equation}
\begin{lemma}%
  \label{lem:decrease}
  Fix $x \in \R^{d}$.
  If the stepsize $\alpha>0$ is smaller than the inverse of the Lipschitz constant, i.e. $\alpha\le 1/L$, then for all $z\in \R^{d}$
  \begin{equation*}
	(g+f)(x^{+}) + \frac{1}{2 \alpha} \lVert x^{+}-z \rVert^2 \le
	(g+f)(z) + \frac{1}{2 \alpha} \lVert x - z \rVert^2.
  \end{equation*}
\end{lemma}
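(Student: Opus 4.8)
The plan is to recognize that the backward step defining $x^{+}$ is itself the minimizer of a strongly convex function, and then to feed this structure into the two quoted lemmata. Writing $u := x - \alpha \nabla f(x)$, the definition of the proximal operator says that $x^{+} = \prox{\alpha g}{u}$ is the unique minimizer of
\begin{equation*}
  \Phi(y) := g(y) + \frac{1}{2\alpha}\lVert y - u \rVert^2 .
\end{equation*}
Since $g$ is convex and the quadratic term is $\tfrac{1}{\alpha}$-strongly convex, $\Phi$ is $\tfrac{1}{\alpha}$-strongly convex. First I would therefore apply the Quadratic decay Lemma~\ref{lem:strongly-convex-function-values} with $\sigma = 1/\alpha$ at the minimizer $x^{+}$, evaluating at an arbitrary $z \in \R^{d}$; this produces the extra term $\tfrac{1}{2\alpha}\lVert z - x^{+}\rVert^2$ on which the whole statement hinges.

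Next I would expand the squared norms $\lVert z - u\rVert^2$ and $\lVert x^{+} - u\rVert^2$ hidden inside $\Phi(z)$ and $\Phi(x^{+})$. The terms $\tfrac{\alpha}{2}\lVert \nabla f(x)\rVert^2$ cancel, and after rearranging one is left with an upper bound for $g(x^{+})$ of the form $g(z) + \tfrac{1}{2\alpha}\lVert z - x\rVert^2 - \tfrac{1}{2\alpha}\lVert x^{+} - x\rVert^2 + \langle \nabla f(x), z - x^{+}\rangle - \tfrac{1}{2\alpha}\lVert z - x^{+}\rVert^2$. To control the $f$-part I would combine two standard facts: the Descent Lemma~\ref{lem:descent-lemma} applied to the pair $(x, x^{+})$, giving $f(x^{+}) \le f(x) + \langle \nabla f(x), x^{+} - x\rangle + \tfrac{L}{2}\lVert x^{+} - x\rVert^2$, and the subgradient inequality for the convex $f$ at $x$, giving $f(x) \le f(z) + \langle \nabla f(x), x - z\rangle$. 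Substituting the latter into the former telescopes the linear terms into $f(x^{+}) \le f(z) + \langle \nabla f(x), x^{+} - z\rangle + \tfrac{L}{2}\lVert x^{+} - x\rVert^2$.

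Finally I would add the $g$-estimate and the $f$-estimate: the two inner products involving $\nabla f(x)$ are exact opposites and cancel, leaving
\begin{equation*}
  (g+f)(x^{+}) \le (g+f)(z) + \frac{1}{2\alpha}\lVert z - x\rVert^2 - \frac{1}{2\alpha}\lVert z - x^{+}\rVert^2 + \left(\frac{L}{2} - \frac{1}{2\alpha}\right)\lVert x^{+} - x\rVert^2 .
\end{equation*}
Here the stepsize restriction $\alpha \le 1/L$ is used exactly once, to guarantee that the coefficient $\tfrac{L}{2} - \tfrac{1}{2\alpha}$ is nonpositive so that the last term may be discarded; moving $\tfrac{1}{2\alpha}\lVert z - x^{+}\rVert^2$ to the left-hand side then gives the claim. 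I expect the only delicate point to be the bookkeeping of the cancellations, in particular checking that the cross term $\langle \nabla f(x), z - x^{+}\rangle$ generated by the strong-convexity estimate is annihilated precisely by the term coming from the descent/convexity combination. Conceptually there is no real obstacle: the whole force of the lemma comes from the ``free'' quadratic gain $\tfrac{1}{2\alpha}\lVert z - x^{+}\rVert^2$ supplied by strong convexity of the prox, which is exactly what upgrades the ordinary sufficient-decrease inequality into the proximal-point-type inequality stated here.
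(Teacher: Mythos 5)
Your proof is correct and is essentially the paper's argument: both apply the quadratic-decay Lemma~\ref{lem:strongly-convex-function-values} to the $\tfrac{1}{\alpha}$-strongly convex prox objective, then combine the Descent Lemma~\ref{lem:descent-lemma} (where $\alpha \le 1/L$ enters) with the gradient inequality for the convex $f$. The only difference is bookkeeping: the paper folds the linearization $l_x(z) = f(x) + \langle \nabla f(x), z-x\rangle$ into the strongly convex model $\Phi_x(z) = g(z) + l_x(z) + \tfrac{1}{2\alpha}\lVert z-x\rVert^2$, which differs from your $\Phi$ only by the constant $\tfrac{\alpha}{2}\lVert \nabla f(x)\rVert^2 - f(x)$, so the cross terms you cancel by hand never appear there.
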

\begin{proof}
  First note that~\eqref{eq:x-plus} is equivalent to
  \begin{equation*}
    x^{+} = \argmin_{z\in\H{}} \left\{ g(z) + f(x) + \langle \nabla f(x),z-x \rangle + \frac{1}{2 \alpha} \lVert z - x \rVert^2 \right\}.
  \end{equation*}
  We define for all $z\in \R^{d}$
  \begin{equation*}
	l_{x}(z) := f(x) + \langle \nabla f(x), z-x\rangle
  \end{equation*}
  and
  \begin{equation*}
	\Phi_{x}(z) = g(z) + l_{x}(z) + \frac{1}{2 \alpha} \Vert z - x \Vert^2.
  \end{equation*}
  Notice that $\Phi_{x}$ is $\frac1\alpha$-strongly convex.
  Thus, by applying Lemma~\ref{lem:strongly-convex-function-values}, we have that for all $z\in\H$
  \begin{equation*}
    \Phi_{x}(x^{+}) + \frac{1}{2 \alpha} \lVert x^{+}-z \rVert^2 \le \Phi_{x}(z).
  \end{equation*}
  By the gradient inequality we know that
  \begin{equation*}
    l_{x}(z) = f(x) + \left\langle \nabla f(x), z - x \right\rangle \le f(z), \quad \forall z \in \H{}.
  \end{equation*}
  At the same time, by Lemma~\ref{lem:decrease} (Descent Lemma) and the fact that $1/\alpha\ge L$ we have that for all $z \in \R^d$
  \begin{equation}
    \label{eq:descent_lemma}
    f(x^{+}) \le l_{x}(z) + \frac{1}{2 \alpha}\lVert x^{+} - x \rVert^2
  \end{equation}
  which in return shows the statement of the lemma.
\end{proof}

\bigskip

\begin{proof}[Proof of Theorem~\ref{thm:proxgrad_selfcontracted}]
  Let $\{x_k\}_{k\in \N}$ be the sequence obtained by applying Lemma~\ref{lem:decrease} with $x:=x_{k}$, $x^{+}=x_{k+1}$ and $\alpha:=\alpha_{k}$ we get that
  \begin{equation*}
     (g+f)(x_{k+1}) + \frac{1}{2 \alpha_{k}} \lVert x_{k+1} - z \rVert^2 \le (g+f)(z) + \frac{1}{2 \alpha_{k}} \lVert x_{k} - z \rVert^2, \quad \forall z\in\H.
  \end{equation*}
  Setting $z=x_{k}$, the above yields that
  \begin{equation*}
	(g+f)(x_{k+1}) \le (g+f)(x_{k}), \quad \forall k \in \N.
  \end{equation*}
  Moreover, taking any $z\in\R^d$ such that $(g+f)(z)\le (g+f)(x_{k+1})$ we deduce
  \begin{equation*}
    \lVert x_{k+1} - z \rVert \le \lVert x_{k} - z \rVert.
  \end{equation*}
  In particular, for all $m > k+1$, we get
  \begin{equation*}
	\Vert x_{k+1} - x_{m} \Vert \le \Vert x_{k}- x_{m} \Vert.
  \end{equation*}
  Since $k$ is arbitrary in the above inequality, it can be replaced by $k+1$, yielding
  \begin{equation*}
	\Vert x_{k+2} - x_{m} \Vert \le \Vert x_{k+1}- x_{m} \Vert \le \Vert x_{k}- x_{m} \Vert.
  \end{equation*}
  Using this iterative argument for $l\in \{k+1, k+2, \dots, m\}$, we deduce
  \begin{equation*}
	\Vert x_{l} - x_{m} \Vert \le \Vert x_{k}- x_{m} \Vert.
  \end{equation*}
  This shows that the sequence is self-contracted, as asserted.
\end{proof}

\subsection{Stepsize determined via Backtracking}%
\label{sub:proximal_gradient_with_backtracking}

In practice, the Lipschitz constant of the gradient of $f$ is not always known and estimating it might lead to poor stepsizes and thus to slow convergence. In this case it is natural to use some kind of line search procedure to determine an appropriate stepsize.
We will describe one (reminiscent of Armijo test) as presented in~\cite{fista}.
The idea is the following: We want to apply Algorithm~\ref{alg:proxgrad} without the restriction $1/\alpha\ge L$ on the stepsize. In every iteration we start with an initial stepsize and decrease it until the statement of the Descent Lemma~\ref{lem:descent-lemma} is fulfilled, see below:

\begin{algo}[Proximal-Gradient with Backtracking Line search]%
  \label{alg:prox_grad_backtracking}
  For $x_{0} \in \H$, $\alpha>0$ and $0<q<1$ consider
  \begin{equation*}
  (\forall k \geq 0) \quad
    \left\lfloor \begin{array}{l l}
				   \text{Set } \alpha_{k}=\alpha \smallskip \\
      \text{while} \quad f(T_{\alpha}(x_{k})) > f(x_{k}) + \left\langle \nabla f(x_{k}), T_{\alpha}(x_{k})-x_{k} \right\rangle + \frac{1}{2 \alpha}\lVert T_{\alpha}(x_{k}) - x_{k} \rVert^2,\\ \text{do}
        \qquad  \alpha_{k} := q \alpha_{k}\smallskip\\
         x_{k+1} = T_{\alpha_{k}}(x_{k}).
    \end{array}\right.
  \end{equation*}
\end{algo}
Note that the parameter $\alpha>0$ that will be finally chosen in each iteration might be larger than $1/L$. This means that, for the cost of some extra function evaluations, a precise knowledge of the Lipschitz constant of the gradient is no more required; in addition, the algorithm might produce larger steps than what would have been allowed in Algorithm~\ref{alg:alternating-projections}, yielding a faster convergence.

\begin{theorem}%
  \label{thm:proxgrad_backtracking}
  The iterates generated by the Proximal-Gradient-Method with Backtracking Algorithm~\ref{alg:prox_grad_backtracking} form a self-contracted sequence.
\end{theorem}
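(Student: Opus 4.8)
The plan is to notice that the bound $\alpha \le 1/L$ is used in the proof of Lemma~\ref{lem:decrease} in exactly one place: to secure the pointwise descent inequality~\eqref{eq:descent_lemma} at the updated point $x^{+}$. Indeed, the derivation of the conclusion of that lemma rests on only three ingredients — the $\tfrac1\alpha$-strong convexity of $\Phi_x$ (which is independent of $L$), the gradient inequality $l_x(z) \le f(z)$ (also independent of $L$), and the single estimate
\[
  f(x^{+}) \le f(x) + \langle \nabla f(x), x^{+}-x\rangle + \frac{1}{2\alpha}\lVert x^{+}-x \rVert^2 .
\]
Accordingly, I would first restate Lemma~\ref{lem:decrease} in a stepsize-agnostic form: whenever this one inequality holds at $x^{+}=T_\alpha(x)$, the conclusion $(g+f)(x^{+}) + \tfrac{1}{2\alpha}\lVert x^{+}-z\rVert^2 \le (g+f)(z) + \tfrac{1}{2\alpha}\lVert x-z\rVert^2$ follows for all $z$, with the identical proof, the displayed inequality taking the role previously played by the combination of Lemma~\ref{lem:descent-lemma} and $\alpha \le 1/L$.

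Next I would match this hypothesis with the backtracking rule. The while-loop in Algorithm~\ref{alg:prox_grad_backtracking} shrinks the stepsize by the factor $q$ until its guard fails, that is, until
\[
  f(T_{\alpha_k}(x_k)) \le f(x_k) + \langle \nabla f(x_k), T_{\alpha_k}(x_k)-x_k\rangle + \frac{1}{2\alpha_k}\lVert T_{\alpha_k}(x_k)-x_k \rVert^2 .
\]
Reading this with $x:=x_k$, $x^{+}:=x_{k+1}=T_{\alpha_k}(x_k)$ and $\alpha:=\alpha_k$, it is precisely the pointwise descent inequality above. Hence the stepsize-agnostic version of Lemma~\ref{lem:decrease} applies at \emph{every} iteration of the backtracking scheme, regardless of whether the accepted $\alpha_k$ exceeds $1/L$.

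From here the conclusion is immediate, because the proof of Theorem~\ref{thm:proxgrad_selfcontracted} already permits a variable stepsize $\{\alpha_k\}$ and invokes nothing beyond the conclusion of Lemma~\ref{lem:decrease} at each step. I would therefore re-run that argument verbatim: setting $z=x_k$ yields the monotone decrease $(g+f)(x_{k+1}) \le (g+f)(x_k)$; then for any $m>k+1$, since the function values decrease we have $(g+f)(x_m)\le (g+f)(x_{k+1})$, so applying the inequality with $z=x_m$ gives $\lVert x_{k+1}-x_m\rVert \le \lVert x_k-x_m\rVert$; iterating over $l\in\{k+1,\dots,m\}$ delivers self-contractedness.

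The one genuine loose end — and the only place I expect real work — is to confirm that the backtracking loop terminates, so that $\alpha_k$ and $x_{k+1}$ are well defined. This is where the \emph{true} Descent Lemma~\ref{lem:descent-lemma} re-enters: as soon as a reduced stepsize satisfies $\alpha_k\le 1/L$, Lemma~\ref{lem:descent-lemma} together with $L\le 1/\alpha_k$ forces the loop's inequality to hold, so the loop exits after at most finitely many reductions and in fact $\alpha_k \ge \min\{\alpha,\,q/L\}>0$. With well-definedness secured, the exit criterion supplies exactly the hypothesis required by the restated lemma, and no further estimate is needed.
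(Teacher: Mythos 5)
Your proposal is correct and takes essentially the same route as the paper: the paper's proof likewise observes that the hypothesis $\alpha \le 1/L$ enters Lemma~\ref{lem:decrease} only through the single inequality~\eqref{eq:descent_lemma}, which the backtracking exit criterion supplies directly, after which the argument of Theorem~\ref{thm:proxgrad_selfcontracted} runs unchanged. Your additional verification that the while-loop terminates (via Lemma~\ref{lem:descent-lemma} once the stepsize drops below $1/L$, giving $\alpha_k \ge \min\{\alpha, q/L\} > 0$) is a point the paper leaves implicit, and is a welcome touch of rigor rather than a departure in method.
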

\begin{proof}
  The proof follows the same lines as the one of Theorem~\ref{thm:proxgrad_selfcontracted}, the only difference being in~\eqref{eq:descent_lemma} of Lemma~\ref{lem:decrease}. This equation now holds true because of the way the iterations are chosen ensuring
  \begin{equation*}
	f(x_{k+1}) \le f(x_{k}) + \left\langle \nabla f(x_{k}), x_{k+1}-x_{k} \right\rangle + \frac{1}{2 \alpha}\lVert x_{k+1} - x_{k} \rVert^2.
  \end{equation*}
  (In the proof of Theorem~\ref{thm:proxgrad_selfcontracted} the above estimate was provided by Lemma~\ref{lem:descent-lemma}.)
\end{proof}

\subsection{Special cases: proximal-point algorithm, gradient descent}%

In problem~\eqref{eq:convex_splitting} we may consider separately the particular instances $f=0$ and $g=0$.
In the first case, the problem reduces to the minimization of a lower semicontinuous, convex function $g$ via the proximal-point algorithm. In particular, from Algorithm~\ref{alg:proxgrad} and the previous analysis, we deduce the following result, which first appeared (with a different proof) in~\cite[Theorem~4.17]{DDDL2015}
\begin{corollary}[Proximal-Point Algorithm]
  \label{cor:prox}
  Let $g: \R^{d} \to \overline{\R}$ be a convex, lower semicontinuous function and ${\{\alpha_k\}}_{k\in \N} \subseteq (0, +\infty)$. Then, for any $x_{0}\in \R^{d}$ the proximal sequence
  $$
  x_{k+1} = \prox{\alpha_{k}g}{x_{k}}, \quad \forall k \ge 0,
  $$
  is a self-contracted curve.
\end{corollary}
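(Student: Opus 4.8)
The plan is to derive Corollary~\ref{cor:prox} as the special case $f\equiv 0$ of Theorem~\ref{thm:proxgrad_selfcontracted}, since the proximal-point iteration $x_{k+1}=\prox{\alpha_k g}{x_k}$ is exactly the Proximal-Gradient iteration $x_{k+1}=T_{\alpha_k}(x_k)$ when $f=0$. First I would verify this reduction: with $f\equiv 0$ we have $\nabla f(x)=0$, so $T_{\alpha}(x)=\prox{\alpha g}{x-\alpha\nabla f(x)}=\prox{\alpha g}{x}$, matching the stated iteration precisely. The zero function is trivially differentiable with $0$-Lipschitz gradient, so we may take $L=0$.

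The main obstacle is the stepsize restriction. Theorem~\ref{thm:proxgrad_selfcontracted} is stated for $\{\alpha_k\}\subseteq(0,1/L)$, whereas the corollary allows arbitrary $\{\alpha_k\}\subseteq(0,+\infty)$. This discrepancy is only apparent: the restriction $\alpha\le 1/L$ enters the analysis solely through inequality~\eqref{eq:descent_lemma} in the proof of Lemma~\ref{lem:decrease}, where it guarantees $f(x^{+})\le l_x(z)+\tfrac{1}{2\alpha}\lVert x^+-x\rVert^2$. When $f\equiv 0$ this inequality reads $0\le 0+\tfrac{1}{2\alpha}\lVert x^+-x\rVert^2$, which holds for \emph{every} $\alpha>0$ without any Lipschitz-type constraint. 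Hence I would point out that Lemma~\ref{lem:decrease} remains valid for all $\alpha>0$ in this degenerate case, which is what frees the stepsize.

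With Lemma~\ref{lem:decrease} available for arbitrary $\alpha_k>0$, the conclusion follows verbatim from the argument in the proof of Theorem~\ref{thm:proxgrad_selfcontracted}: setting $g+f=g$, the lemma yields for each $k$ and all $z\in\R^d$ the inequality $g(x_{k+1})+\tfrac{1}{2\alpha_k}\lVert x_{k+1}-z\rVert^2\le g(z)+\tfrac{1}{2\alpha_k}\lVert x_k-z\rVert^2$. Choosing $z=x_k$ gives monotone decrease $g(x_{k+1})\le g(x_k)$, and then selecting any $z$ with $g(z)\le g(x_{k+1})$ produces the key one-step contraction $\lVert x_{k+1}-z\rVert\le\lVert x_k-z\rVert$; taking $z=x_m$ for $m>k+1$ and iterating the index exactly as before yields $\lVert x_l-x_m\rVert\le\lVert x_k-x_m\rVert$ for all $k\le l\le m$, which is self-contractedness. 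I therefore expect the proof to be a short remark: the corollary is immediate once one observes that the only place the stepsize bound was used becomes vacuous when $f\equiv 0$.
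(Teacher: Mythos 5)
Your proposal is correct and follows exactly the paper's route: the paper also deduces Corollary~\ref{cor:prox} as the special case $f\equiv 0$ of Theorem~\ref{thm:proxgrad_selfcontracted}, leaving the details to the reader. Your explicit check that the stepsize restriction $\alpha\le 1/L$ only enters through~\eqref{eq:descent_lemma}, which is vacuous when $f\equiv 0$ (so any $\alpha_k>0$ is admissible), is precisely the point the paper glosses over, and you handle it correctly.
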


If $g=0$, the problem reduces to minimizing a smooth convex function with Lipschitz gradient via steepest descent. In particular, we obtain the following result, which is new. (While preparing the manuscript, the recent interesting preprint \cite{gupta2019path} came to our attention. The forthcoming result also appears there with a different proof (see~\cite[Lemma~3.1]{gupta2019path}).

\begin{corollary}[Steepest Descent]
  \label{cor:gradient-descent}
  Let $f:\R^{d} \to \R$ be a smooth convex function with $L$-Lipschitz gradient and $\{\alpha_k\}_{k\in \N}$ either be bounded from above by $1/L$ or produced by backtracking line search. Then, for any $x_{0} \in \R^{d}$, the sequence $\{x_{k}\}_{k\in \N}$ defined by
  $$
  x_{k+1} = x_{k} - \alpha_{k}\nabla f(x_{k}), \quad \forall k \ge 0,
  $$
  is self-contracted.
\end{corollary}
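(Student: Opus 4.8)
The plan is to obtain this statement as the special case $g\equiv 0$ of the general framework already developed, so that no new argument is required. First I would observe that the zero function is proper, convex and lower semicontinuous, hence an admissible choice of $g$ in problem~\eqref{eq:convex_splitting}, while the hypotheses imposed on $f$ (smooth, convex, with $L$-Lipschitz gradient) are exactly those assumed throughout Section~\ref{sec:proximal_gradient_is_self_contracted}.

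Next I would compute the proximal-gradient operator~\eqref{eq:prox-grad-op} for this choice. Since
\begin{equation*}
  \prox{0}{y} = \argmin_{z\in\H}\; \frac{1}{2}\lVert z - y \rVert^2 = y,
\end{equation*}
the backward (proximal) step collapses to the identity, and therefore
\begin{equation*}
  T_{\alpha}(x) = \prox{0}{x - \alpha\nabla f(x)} = x - \alpha\nabla f(x).
\end{equation*}
Thus the recursion $x_{k+1}=T_{\alpha_k}(x_k)$ coincides verbatim with the steepest descent scheme in the statement; moreover, the backtracking test in Algorithm~\ref{alg:prox_grad_backtracking} reduces to the classical Armijo-type condition for gradient descent, so that algorithm likewise specializes correctly.

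Finally I would invoke the appropriate self-contractedness result according to the stepsize rule: Theorem~\ref{thm:proxgrad_selfcontracted} when the stepsizes satisfy $\alpha_k \le 1/L$, and Theorem~\ref{thm:proxgrad_backtracking} when they are generated by backtracking line search. In either regime the sequence produced is precisely the gradient descent sequence, which is therefore self-contracted. I do not expect any genuine obstacle: the whole content lies in recognizing gradient descent as the $g=0$ instance of the proximal-gradient method, and the only point to verify is that this reduction is exact --- which the computation above confirms.
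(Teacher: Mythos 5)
Your proposal is correct and is essentially the paper's own argument: the corollary is presented there as the immediate $g=0$ specialization of Theorem~\ref{thm:proxgrad_selfcontracted} (for stepsizes bounded by $1/L$) and Theorem~\ref{thm:proxgrad_backtracking} (for backtracking), and your computation that $\prox{0}{x-\alpha\nabla f(x)} = x-\alpha\nabla f(x)$ is precisely the reduction the paper leaves implicit. Nothing is missing.
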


\subsection{A priory estimates for convergence}%
\label{sub:estimates}

An important consequence of Theorem~\ref{thm:proxgrad_selfcontracted}, Theorem~\ref{thm:proxgrad_backtracking} and Corollaries~\ref{cor:prox}--\ref{cor:gradient-descent}, is the following.
If the optimization problem~\eqref{eq:convex_splitting} has a solution, then the iterates form a bounded, self-contracted sequence. Therefore, by~\cite[Theorem~3.3]{DDDL2015} we deduce that the sequence of iterates $\{x_k\}_{k\in\N}$ has finite length, i.e.
\begin{equation*}
  \sum_{k=0}^{\infty}  \Vert x_{k+1} - x_{k} \Vert < + \infty\,,
\end{equation*}
and thus it converges to some point $x_{\infty}\in \R^d$. In addition, provided that the stepsize ${\{\alpha_k\}}_{k\in \N}$ does not go to zero too fast (for instance if it is not in $\ell^{1}(\N)$), we deduce that $x_{\infty}$ will be a minimizer of our objective function. \smallskip

Moreover, the bound on the length of the sequence of the iterates
\begin{equation*}
  \sum_{k=0}^{\infty}  \Vert x_{k+1} - x_{k} \Vert < \, C_{d} \, d_{\mathcal{S}}(x_{0}), \quad \mathcal{S}=\argmin{(f+g)}
\end{equation*}
does not depend on the data $f,g$ but only on the dimension and the distance of the initial point to the set of minimizers.

\section{Projection Algorithms}%
\label{sec:projection_algorithms}

In this section we investigate the property of self-contractedness for various projection type algorithms. Although a direct approach for establishing this property would be quite involved, it turns out that the overall analysis simplifies significantly by utilizing the main result of the previous section.\smallskip

Let us introduce our main problem of finding the intersection of two closed convex sets $A,B\subset \H$
\begin{equation}\label{eq:vlad}
  \text{Find} \quad x \in  A \cap B.
\end{equation}

\subsection{Alternating Projections}%
\label{sub:alternating_projections}

The arguably best known algorithm for solving this problem is given by
\begin{algo}[Alternating Projections{\cite[page 186]{bauschke1993convergence}}]%
  \label{alg:alternating-projections}
  For $x_{0} \in \H$, consider the iterative scheme
  \begin{equation*}
    (\forall k \geq 0) \quad
    \left\lfloor \begin{array}{l}
        y_{k+1} = P_A(x_{k}) \smallskip \\
        x_{k+1} = P_B(y_{k+1}).
    \end{array}\right.
  \end{equation*}
\end{algo}
We are going to prove that both sequences above are self-contracted. This will follow from the self-contractedness of the iterates of the Proximal-Gradient-Method.
\begin{theorem}%
  \label{thm:AP-selfcontracted}
  Let ${\{x_{k}\}}_{k\in\N}$ and ${\{y_{k}\}}_{k\in\N}$ be the two sequences of iterates generated by Algorithm~\ref{alg:alternating-projections}. Then, both sequences are self-contracted.
\end{theorem}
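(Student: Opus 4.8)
The plan is to realize alternating projections as a special case of the Proximal-Gradient-Method and then invoke Theorem~\ref{thm:proxgrad_selfcontracted}. The key observation is that projection onto a closed convex set is itself a proximal operator: for the indicator function $\iota_B$ of $B$ (which is proper, convex and lower semicontinuous) one has $P_B(y) = \prox{\iota_B}{y}$, and more generally $P_B(y)=\prox{\alpha\iota_B}{y}$ for every $\alpha>0$, since scaling an indicator function does not change it. Similarly, projection onto $A$ can be encoded by a smooth function whose gradient points toward $A$. The natural candidate is the squared-distance function $f(x)=\tfrac12 d_A(x)^2=\tfrac12\lVert x-P_A(x)\rVert^2$, which is convex, differentiable with $\nabla f(x)=x-P_A(x)$, and whose gradient is $1$-Lipschitz (so $L=1$).

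With these two identifications in hand, I would set $g=\iota_B$ and $f=\tfrac12 d_A^2$ in problem~\eqref{eq:convex_splitting} and compute the proximal-gradient operator at stepsize $\alpha=1$. The forward step gives $x-\nabla f(x)=x-(x-P_A(x))=P_A(x)$, and the backward step gives $\prox{\iota_B}{P_A(x)}=P_B(P_A(x))$. Hence $T_1(x)=P_B(P_A(x))$, which is precisely one full iteration of Algorithm~\ref{alg:alternating-projections} carrying $x_k$ to $x_{k+1}$. Since $\alpha=1=1/L$ satisfies the stepsize restriction of Algorithm~\ref{alg:proxgrad}, Theorem~\ref{thm:proxgrad_selfcontracted} applies directly and yields that $\{x_k\}_{k\in\N}$ is self-contracted. (One should check that the objective $g+f$ attains its infimum, or at least that the iterates remain well-defined; when $A\cap B\neq\emptyset$ the minimum value zero is attained, but self-contractedness of the iterates is a purely metric property that follows from Lemma~\ref{lem:decrease} regardless.)

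It then remains to treat the companion sequence $\{y_k\}_{k\in\N}$. The cleanest route is to observe that the roles of $A$ and $B$ are symmetric: applying the same argument with $g=\iota_A$ and $f=\tfrac12 d_B^2$ shows that the map $y\mapsto P_A(P_B(y))$ is again a proximal-gradient operator at stepsize one, and the sequence $\{y_k\}$ satisfies exactly this recursion since $y_{k+1}=P_A(x_k)=P_A(P_B(y_k))$. Thus $\{y_k\}$ is self-contracted by the very same theorem, now applied to the pair $(B,A)$ instead of $(A,B)$.

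The main obstacle is not any single step but rather verifying the two structural facts underpinning the reduction: first, that $\tfrac12 d_A^2$ is genuinely $C^{1,1}$ with $1$-Lipschitz gradient (this is a standard consequence of the nonexpansiveness of the projection $P_A$, so $\lVert\nabla f(x)-\nabla f(y)\rVert=\lVert(x-y)-(P_A(x)-P_A(y))\rVert\le\lVert x-y\rVert$ after a short firm-nonexpansiveness computation), and second, that $P_B=\prox{\iota_B}{\cdot}$. Both are classical, so once they are recorded the theorem follows immediately from the already-established self-contractedness of proximal-gradient iterates. I would therefore spend the bulk of the write-up making the operator identity $T_1=P_B\circ P_A$ explicit and transparent, and keep the invocation of Theorem~\ref{thm:proxgrad_selfcontracted} to a single sentence.
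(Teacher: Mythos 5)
Your proposal is correct and follows essentially the same route as the paper: the authors also set $f=\tfrac12 d_A^2$ and $g=\delta_B$, identify $P_B\circ P_A$ with the proximal-gradient operator at stepsize $\alpha=1=1/L$, invoke Theorem~\ref{thm:proxgrad_selfcontracted} for $\{x_k\}_{k\in\N}$, and obtain $\{y_k\}_{k\in\N}$ by symmetry. Your additional verifications (the $1$-Lipschitz gradient of $\tfrac12 d_A^2$, the identity $P_B=\prox{\delta_B}{\cdot}$, and the remark that the boundary stepsize $\alpha=1/L$ is covered by Lemma~\ref{lem:decrease}) are exactly the facts the paper uses implicitly.
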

\begin{proof}
  We interpret alternating projections as a proximal gradient scheme.
  Define $f:= \frac12 d_A^{2}$ and note that a gradient step with respect to this function corresponds to a Projection onto $A$, i.e.\ $\Id-\nabla f= P_A$. Furthermore, we define $g:=\delta_B$ as the indicator function of the set $B$.
  Thus,
  \begin{equation*}
    x_{k+1} = P_B(P_A(x_{k})) = P_B(x_{k} - \nabla f(x_{k})) = \prox{g}{x_{k} - \nabla f(x_{k})}.
  \end{equation*}
  Therefore, Theorem~\ref{thm:proxgrad_selfcontracted} shows that ${\{x_{k}\}}_{k\in\N}$ is self-contracted, whereas self-contractedness of ${\{y_{k}\}}_{k\in\N}$ follows from symmetry.
\end{proof}

\subsection{Averaged Projections}%
\label{sub:averaged_projections}

Consider the more general problem of finding the intersection of a finite number of closed convex sets ${\{C_{i}\}}_{i=1}^n$:
\begin{equation}
  \label{eq:multiple-sets}
  \text{Find}\quad x \in \bigcap_{i=1}^n C_i.
\end{equation}
One could clearly extend the method of alternating projections to this setting and end up with the cyclic projection method, which is given by
\begin{equation*}
  x_{k+1} =  P_n\circ P_{n-1}\circ \cdots \circ P_1 (x_{k}), \quad \forall k\ge0.
\end{equation*}
In practice, this method is often replaced by other schemes (see for instance~\cite{BCC2012} and references therein). We will focus on the following modification (cf. \cite[page~368]{bauschke1996projection}).
\begin{algo}[Averaged Projections]%
  \label{alg:averaged-projection}
  For $x_0\in\H$ consider the iterative scheme
  \begin{equation}
    \label{eq:averaged-projection}
     x_{k+1} = \frac1n \sum_{i=1}^{n} P_{C_i}(x_{k}), \quad \forall k \ge 0.
  \end{equation}
\end{algo}

\begin{proposition}%
  \label{prop:averaged-projection}
  The iterates generated by Algorithm~\ref{alg:averaged-projection} form a self-contracted sequence.
\end{proposition}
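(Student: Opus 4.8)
The plan is to recognize the averaged projection iteration as an instance of steepest descent (the case $g=0$ of the proximal-gradient method) and then invoke Corollary~\ref{cor:gradient-descent}. Concretely, I would set
$$
  f := \frac{1}{2n}\sum_{i=1}^{n} d_{C_i}^{2},
$$
the normalized average of the squared distance functions to the sets $C_i$. The first step is to compute its gradient: since each $\tfrac12 d_{C_i}^2$ is convex and differentiable with $\nabla\bigl(\tfrac12 d_{C_i}^2\bigr)=\Id-P_{C_i}$ (exactly the identity already exploited in the proof of Theorem~\ref{thm:AP-selfcontracted}), one obtains
$$
  \nabla f(x)=\frac1n\sum_{i=1}^{n}\bigl(x-P_{C_i}(x)\bigr)=x-\frac1n\sum_{i=1}^{n}P_{C_i}(x).
$$
Hence $x-\nabla f(x)=\frac1n\sum_{i=1}^{n}P_{C_i}(x)$, so the update~\eqref{eq:averaged-projection} is precisely the steepest-descent step $x_{k+1}=x_k-\nabla f(x_k)$ with stepsize $\alpha=1$.

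The second step is to verify that $f$ fits the hypotheses of Corollary~\ref{cor:gradient-descent}: it must be convex with $L$-Lipschitz gradient, and the implicit stepsize $\alpha=1$ must satisfy $\alpha\le 1/L$. Convexity is immediate, as $f$ is an average of the convex functions $\tfrac12 d_{C_i}^2$. For the Lipschitz constant, I would use that each projection $P_{C_i}$ onto a closed convex set is \emph{firmly nonexpansive}, so that $\Id-P_{C_i}$ is nonexpansive, i.e.\ $1$-Lipschitz; averaging preserves this bound and yields that $\nabla f=\frac1n\sum_{i=1}^{n}(\Id-P_{C_i})$ is $1$-Lipschitz. Thus $L=1$ and the native stepsize $\alpha=1=1/L$ is admissible.

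With these two facts in place, the conclusion is immediate: the sequence $\{x_k\}_{k\in\N}$ generated by~\eqref{eq:averaged-projection} coincides with the steepest-descent sequence for $f$ with constant stepsize $\alpha_k=1\le 1/L$, so Corollary~\ref{cor:gradient-descent} (equivalently, Theorem~\ref{thm:proxgrad_selfcontracted} with $g=0$) guarantees that it is self-contracted. I expect no genuine obstacle here; the only point requiring care is the sharp Lipschitz bound $L=1$, which is what makes the native stepsize $\alpha=1$ fall exactly on the admissible boundary $\alpha\le 1/L$ rather than outside it---had $L$ come out larger, one would have had to rescale $f$ (or equivalently insert a relaxation parameter) to stay within the range allowed by the theorem.
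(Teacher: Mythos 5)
Your proposal is correct and takes essentially the same route as the paper's own (first) proof: the paper sets $f:=\sum_{i=1}^n \tfrac12 d_{C_i}^2$ with stepsize $1/n$ and Lipschitz constant $L\le n$, whereas you absorb the factor $1/n$ into $f$ and use stepsize $1$ with $L\le 1$ (justified by firm nonexpansiveness of the projections) --- a trivial rescaling of the identical argument, concluding via Corollary~\ref{cor:gradient-descent}.
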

We will give to different proofs for the assertion above. The first one relies on reformulating the method of averaged projections as gradient descent (with fixed stepsize), whereas the second one is based on the interpretaton of this method as alternating projections over two closed convex sets in a product space.
\begin{proof}
  \textit{(first proof)}
  Let us define for $i \in \{1,2,\dots, n\}$
  \begin{equation*}
    f_i := \frac12 d_{C_i}^{2}
  \end{equation*}
  and notice that $f:= \sum_{i=1}^{n} f_{i}$ is convex and smooth with $L$-Lipschitz gradient, where $L\le n$.
  Then, the sequence of~\eqref{eq:averaged-projection} can be equivalently defined by
  \begin{equation*}
    x_{k+1} = \left( \Id - \frac1n \nabla \bigg(\sum_{i=1}^{n} f_i \bigg) \right)(x_{k}),
  \end{equation*}
  with stepsize $\alpha_{k} = \alpha = 1/n \le 1/L$.
  The result follows by applying Corollary~\ref{cor:gradient-descent}.
  \bigskip

  \noindent\textit{(second proof)}
  Define the closed convex sets $\widehat{C}=\Pi_{i=1}^n C_i$ and $$\Delta=\{(y^{1}, y^{2}, \dots, y^{n}) \in \R^{d \times n} : y^{1}=y^{2}=\cdots=y^{n}\}.$$
  Then,~\eqref{eq:averaged-projection} is equivalent to
  \begin{equation*}
    x_{k+1} = P_\Delta(P_{\widehat{C}}(x_{k})),
  \end{equation*}
  i.e. the alternating projections method applied to the sets $\widehat{C}$ and $\Delta$ in $\R^{d\times n}$.
  Now can apply Theorem~\ref{thm:AP-selfcontracted}, and deduce the fact that ${(x_{k})}_{k\in\N}$ is self-contracted.
\end{proof}

To summarize, Algorithm~\ref{alg:alternating-projections} applied to problem~\eqref{eq:vlad} or Algorithm~\ref{alg:averaged-projection} applied to problem~\eqref{eq:multiple-sets} share the following common feature: whenever the problem is feasible or at least one of the involved sets is bounded, then the sequence of iterates is bounded. In both cases, the sequence $\{x_k\}_{k\in\N}$ is convergent and the estimates mentioned in Section~\ref{sec:preliminaries} hold true.
\medskip

\paragraph{General Conclusion.} The results of this work confirm the previous understanding~\cite{DLS2010,DDDL2015,DL2018,LMV2015} that self-contractedness relates to convexity, in both, the continuous and discrete setting. Let us mention that this concept has recently been relaxed~in~\cite{DDD2018} to so-called $\lambda$-curves (respectively $\lambda$-sequences). In that work, estimates, similar to the ones of self-contracted curves, have been obtained. However, the general asymptotic behavior of such curves remains unclear. At the same time this notion might be related to more complex settings such as accelerated convex methods or algorithms in nonconvex optimization.

\bigskip

\paragraph{Acknowledgments.} A major part of this work was done during a research
visit of the first author to the University of Chile (March to June 2019) and of
the second author to the University of Vienna (March 2020). These authors wish
to thank their hosts for hospitality.

\bigskip

\noindent Axel B\"OHM \medskip

\noindent Faculty of Mathematics, University of Vienna, \\
Oskar-Morgenstern-Platz 1, 1090 Vienna, Austria
\smallskip

\noindent E-mail: \texttt{axel.boehm@univie.ac.at}\newline%
\noindent\texttt{https://vgsco.univie.ac.at/people/phd-students/axel-boehm/}
\smallskip

\noindent Research supported by the doctoral programme VGSCO (Vienna Graduate School on Computational Optimization),
	  FWF (Austrian Science Fund), project W 1260.

\vspace{0.5cm}

\noindent Aris DANIILIDIS\medskip

\noindent DIM--CMM, UMI CNRS 2807\newline Beauchef 851, FCFM, Universidad de
Chile \smallskip

\noindent E-mail: \texttt{arisd@dim.uchile.cl} \newline
\texttt{http://www.dim.uchile.cl/~arisd/}
\smallskip

\noindent Research supported by the grants: \newline CMM AFB170001, FONDECYT
1171854 (Chile),\\ PGC2018-097960-B-C22 (Spain and EU).

\end{document}